\newtheorem{theorem}{Theorem}[section]
\newtheorem{lemma}[theorem]{Lemma}
\newtheorem{corollary}[theorem]{Corollary}
\theoremstyle{definition}
\newtheorem{problem}[theorem]{Problem}
\title{Geometrical approach to Seidel's switching for strongly regular graphs}
\author{Hiroshi Nozaki}
\begin{document}
\maketitle
\begin{center}
	Graduate School of Information Sciences,
	Tohoku University \\
	Aramaki-Aza-Aoba 09,
	Aoba-ku,
	Sendai 980-8579,
	Japan\\
	nozaki@ims.is.tohoku.ac.jp
\end{center}
\renewcommand{\thefootnote}{\fnsymbol{footnote}}
\footnote[0]{2000 Mathematics Subject Classification: 05E30 (05C85).}
\footnote[0]{Supported by JSPS Research Fellowship.}
\footnote[0]{The author stay at the University of Texas at Brownsville from August 24th, 2009 to August 23rd, 2010}

	\begin{abstract}
	In this paper, we simplify the known switching theorem due to Bose and Shrikhande as follows.   
	Let $G=(V,E)$ be a primitive strongly regular graph with parameters $(v,k,\lambda,\mu)$. 
	Let $S(G,H)$ be the graph from $G$ by switching with respect to a nonempty $H\subset V$.
	Suppose $v=2(k-\theta_1)$ where $\theta_1$ is the nontrivial positive eigenvalue of the $(0,1)$ adjacency matrix of $G$. This strongly regular graph is associated with a regular two-graph. 
	Then, $S(G,H)$ is a strongly regular graph with the same parameters if and only if the subgraph induced by $H$ is $k-\frac{v-h}{2}$ regular. Moreover, $S(G,H)$ is a strongly regular graph with the other parameters if and only if the subgraph induced by $H$ is $k-\mu$ regular and the size of $H$ is $v/2$. We prove these theorems with the view point of the geometrical theory of the finite set on the Euclidean unit sphere. 
	\end{abstract}

\section{Introduction}
A simple graph $G=(V,E)$ is called a strongly regular graph with parameters $(v,k,\lambda,\mu)$ if the cardinality of $V$ is $v$, $G$ is $k$ regular, any two adjacent vertices are adjacent to $\lambda$ common vertices, and any two nonadjacent vertices are adjacent to $\mu$ common vertices. The complement of a strongly regular graph is also strongly regular. A strongly regular graph is said to be primitive if both it and its complement are connected. It is known that an imprimitive strongly regular graph is either a complete multipartite graph, or the disjoint union of a number of copies of a complete graph. Primitive strongly regular graphs are known as association schemes of class $2$, or distance regular graphs of diameter $2$. The $(0,1)$ adjacency matrix of a graph $G$ is defined by the matrix indexed by the vertices, whose $(x,y)$ entry is $1$ if $x$ is adjacent to $y$, and $0$ otherwise. Let $A_1$ be a $(0,1)$ adjacency matrix of a strongly regular graph, and $A_2$ be that of the complement. Then, the identity matrix $I$, $A_1$ and $A_2$ generate the commutative algebra, called the Bose-Mesner algebra. Let $E_i$ ($i=0,1,2$) be the primitive idempotents of the Bose-Mesner algebra, where $E_0:= J/v$, $E_i E_j= \delta_{i,j} E_i$, $J$ is the all one matrix, and $\delta_{i,j}$ is the Kronecker's delta. We can write $A_i$ and $E_i$ as linear combinations of each other, namely $A_i=\sum_{i=0}^2 p_i(j)E_j$ and $E_i=\frac{1}{v}\sum_{i=0}^j q_i(j) A_j$. $P=(p_i(j))$, whose $(j+1,i+1)$ entry is $p_i(j)$, and $Q=(q_i(j))$, whose $(j+1,i+1)$ entry is $q_i(j)$, are called the first and second eigenmatrices, respectively. $P$ and $Q$ give a lot of information of the strongly regular graph, but both of them depend only on parameters $(v,k,\lambda,\mu)$. However, even if two strongly regular graphs have the same parameters, there is a possibility that they are not isomorphic to each other. For examples, we have the exactly four strongly regular graphs with parameters $(28,12,6,4)$, those are the triangular graph $T(8)$ and three other graphs called Chang graphs \cite{Chang}. 

By Seidel's switching of edges of a strongly regular graph associated with a regular two-graph, we may get new strongly regular graphs with the same parameters. Bose and Shrikhande determine the conditions of the switching set which may give a new strongly regular graph \cite{Bose-Shrikhande}. In this paper, we simplify the conditions of the switching set. 
Let $S(G,H)$ denote the graph from $G$ by switching with respect to $H \subset V$. 
Let $k> \theta_1 > \theta_2$ denote the eigenvalues of $(0,1)$ adjacency matrix of a strongly regular graph. 
The following are the main theorems in this paper. 
	\begin{theorem} \label{main}
	Let $G=(V,E)$ be a primitive strongly regular graph with parameters $(v,k,\lambda,\mu)$. 
	$H$ is a subset of $V$, and its cardinality is $h$.
	Suppose $v=2(k-\theta_1)$. 
	Then, the following are equivalent:
	
	{\rm (i)} $S(G,H)$ is a strongly regular graph with parameters $(v,k,\lambda,\mu)$.	
	
	{\rm (ii)} The subgraph induced by $H$ is $k-\frac{v-h}{2}$ regular. 
	\end{theorem}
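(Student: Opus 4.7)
The plan is to reformulate Seidel's switching on the Seidel matrix $S:=J-I-2A_1$, which transforms to $S'=DSD$ under switching with respect to $H$, where $D:=I-2\operatorname{diag}(\chi_H)$ and $\chi_H$ is the characteristic vector of $H$. Expanding in the primitive idempotents gives $S=(v-2k-1)E_0+(-2\theta_1-1)E_1+(-2\theta_2-1)E_2$, and the hypothesis $v=2(k-\theta_1)$ collapses the coefficient on $E_0$ with the one on $E_1$, so $S$ has exactly two distinct eigenvalues $-2\theta_1-1$ and $-2\theta_2-1$, with eigenspaces $\operatorname{Im}(E_0+E_1)$ and $\operatorname{Im}(E_2)$. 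Since $DSD$ has the same spectrum, $A_1'=(J-I-DSD)/2$ is $k$-regular iff $S'\mathbf{1}=-(2\theta_1+1)\mathbf{1}$, iff $D\mathbf{1}\in\operatorname{Im}(E_0+E_1)$, iff $E_2\chi_H=0$ (using $D\mathbf{1}=\mathbf{1}-2\chi_H$ and $\mathbf{1}\in\operatorname{Im}(E_0)$). Once $A_1'$ is $k$-regular, on $\mathbf{1}^\perp$ one has $S'=-I-2A_1'$, and the two-valued Seidel spectrum forces the ordinary spectrum of $A_1'$ to be $\{k,\theta_1,\theta_2\}$; a $k$-regular graph with three distinct eigenvalues is strongly regular with the same parameters $(v,k,\lambda,\mu)$ as $G$.

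The implication (i)$\Rightarrow$(ii) is then immediate: if $S(G,H)$ is $k$-regular, the new degree of any $x\in H$ equals $d_H(x)+\bigl((v-h)-(k-d_H(x))\bigr)=2d_H(x)+(v-h)-k$, and setting this to $k$ forces $d_H(x)=k-(v-h)/2$.

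For (ii)$\Rightarrow$(i) I would compute $\chi_H^\top A_1\chi_H$ in two ways. Combinatorially, (ii) gives $\chi_H^\top A_1\chi_H=2|E(H)|=h\bigl(k-(v-h)/2\bigr)$. Spectrally, decompose $\chi_H=(h/v)\mathbf{1}+u_1+u_2$ with $u_i:=E_i\chi_H$; using $\|\chi_H\|^2=h$, the identity $\|u_1\|^2+\|u_2\|^2=h(v-h)/v$, and $A_1=kE_0+\theta_1 E_1+\theta_2 E_2$, one obtains $\chi_H^\top A_1\chi_H=kh^2/v+\theta_1\,h(v-h)/v+(\theta_2-\theta_1)\|u_2\|^2$. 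Equating the two expressions reduces to $(\theta_2-\theta_1)\|u_2\|^2=\frac{h(v-h)}{v}\bigl(k-\theta_1-\tfrac{v}{2}\bigr)$, and substituting $v=2(k-\theta_1)$ makes the right-hand side vanish identically, leaving $E_2\chi_H=0$, which by the first paragraph completes the proof.

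The main obstacle—and simultaneously the geometric content advertised in the abstract—is recognising that the regular two-graph hypothesis $v=2(k-\theta_1)$ is exactly the identity that collapses the constant term in the two-way count and reduces the Seidel matrix to two eigenvalues. In the spherical embedding supplied by $E_1$, membership of $\chi_H$ in the switching-invariant subspace is detected by the single scalar $\|E_2\chi_H\|^2$, so the lone regularity condition on the subgraph induced by $H$ automatically forces the companion condition on $V\setminus H$; without this collapse one would need to impose the stronger two-sided regularity assumption appearing in the original Bose--Shrikhande formulation.
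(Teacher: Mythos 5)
Your argument is correct, but it reaches the theorem by a genuinely different route than the paper. The paper embeds $V$ on the unit sphere via the idempotent $E_2$ (Lemma 4.1 identifies $I+\frac{1}{\rho}B$ with $\frac{m_2}{v}E_2$), observes that switching negates the points of $\varphi(H)$, and then runs everything through spherical design theory: both directions are obtained from harmonic polynomials, distance invariance, and the Delsarte--Goethals--Seidel fact that a $2$-distance set which is a spherical $2$-design carries a strongly regular graph. You instead stay entirely inside the Bose--Mesner algebra: the hypothesis $v=2(k-\theta_1)$ collapses the Seidel matrix to two eigenvalues with eigenspaces $\operatorname{Im}(E_0+E_1)$ and $\operatorname{Im}(E_2)$, $k$-regularity of $S(G,H)$ is shown to be equivalent to $E_2\chi_H=0$, and regularity plus the two-valued Seidel spectrum forces the adjacency spectrum $\{k,\theta_1,\theta_2\}$, hence strong regularity with the same parameters. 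The two proofs are close cousins -- $E_2\chi_H=0$ is precisely the paper's statement that $\varphi(H)$ is a spherical $1$-design, and your two-way evaluation of $\chi_H^{\top}A_1\chi_H$ is the coordinate-free form of the paper's computation $\sum_{x,y\in\varphi(H)}\langle x,y\rangle=0$ -- but yours needs no design-theoretic input, and your (i)$\Rightarrow$(ii) direction is a one-line degree count that uses only $k$-regularity, where the paper invokes the $2$-design property of $X_H$ and distance invariance; in spirit your proof sits between the paper's geometric argument and the interlacing proof it attributes to Brouwer. Two points you should make explicit: the spectral step shows the spectrum of $A_1'$ lies in $\{k,\theta_1,\theta_2\}$ with $k$ of multiplicity one (on $\mathbf{1}^{\perp}$ only $\theta_1,\theta_2<k$ occur), and it is this multiplicity-one fact that gives connectivity -- ``regular with three distinct eigenvalues implies strongly regular'' fails for disconnected graphs. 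Also, in your closing remark the relevant spherical embedding is the one afforded by $E_2$, not $E_1$; this is only a slip in the commentary, not in the proof.
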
  
	
	\begin{theorem} \label{main2}
	Let $G=(V,E)$ be a primitive strongly regular graph with the parameters $(v,k,\lambda,\mu)$. 
	$H$ is a subset of $V$.  
	Suppose $v=2(k-\theta_1)$. Then, the following are equivalent.
	
		{\rm (i)} $S(G,H)$ is a strongly regular with parameters $(v,k+c,\lambda +c,\mu +c)$ where $c=v/2-2 \mu$. 
		
		{\rm (ii)} The cardinality of $H$ is equal to $v/2$ and the subgraph induced by $H$ is $k-\mu$ regular.
	\end{theorem}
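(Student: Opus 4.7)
The strategy is to recast ``$S(G,H)$ is strongly regular with the target parameters'' as a spectral condition on the Seidel matrix $S:=J-I-2A_1$ of $G$, and then translate that condition into the combinatorial statement (ii) via the Bose--Mesner eigenspace decomposition $\mathbb{R}^V=\langle\mathbf{1}\rangle\oplus E_1\mathbb{R}^V\oplus E_2\mathbb{R}^V$. The hypothesis $v=2(k-\theta_1)$ makes $\mathbf{1}$ an eigenvector of $S$ for the eigenvalue $\rho_1:=-1-2\theta_1$, and the only other eigenvalue of $S$ is $\rho_2:=-1-2\theta_2$. Let $D_H$ denote the $\pm1$ diagonal matrix with $-1$ on $H$. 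Then $S':=D_HSD_H$ is similar to $S$ and hence has the same two eigenvalues, so $S(G,H)$ is regular precisely when $\mathbf{1}$ is an eigenvector of $S'$, equivalently when $\chi:=D_H\mathbf{1}=\mathbf{1}-2\mathbf{1}_H$ is an eigenvector of $S$.

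A short calculation, using the identity $\theta_2=k-2\mu$ (which is forced by $v=2(k-\theta_1)$ together with the standard SRG identity $k(k-\lambda-1)=\mu(v-k-1)$), shows that the target parameters $(v,k+c,\lambda+c,\mu+c)$ with $c=v/2-2\mu$ correspond precisely to $\mathbf{1}$ being attached to the eigenvalue $\rho_2$ under $S'$, i.e., to $\chi\in E_2\mathbb{R}^V$, or equivalently $E_0\chi=E_1\chi=0$. (The alternative possibility, $\mathbf{1}$ remaining attached to $\rho_1$ under $S'$, is the content of Theorem~\ref{main}.) Conversely, once $\mathbf{1}$ is a $\rho_2$-eigenvector of $S'$, the two-eigenvalue structure of $S'$ together with the primitivity of $G$ forces $A':=(J-I-S')/2$ to have exactly the three eigenvalues $k+c,\theta_1,\theta_2$, making $S(G,H)$ strongly regular with the predicted parameters. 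So the task reduces to characterising $\chi\in E_2\mathbb{R}^V$ combinatorially.

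The equation $E_0\chi=0$ is $\mathbf{1}^T\chi=0$, i.e., $|H|=v/2$. Assuming this, I would use $E_0\mathbf{1}_H=\tfrac12\mathbf{1}$ to write $E_2\mathbf{1}_H=\mathbf{1}_H-\tfrac12\mathbf{1}-E_1\mathbf{1}_H$, apply $A_1$, and invoke $\theta_2=k-2\mu$ to read off that $E_1\mathbf{1}_H=0$ is equivalent to the pair of local conditions $d_H(x)=k-\mu$ for every $x\in H$ and $d_H(x)=\mu$ for every $x\notin H$. This immediately yields (i)$\Rightarrow$(ii). For the converse, the hypothesis of (ii) only constrains $H$; to recover $d_H(x)=\mu$ on $V\setminus H$ I plan a variance argument, evaluating $\mathbf{1}_H^TA_1^2\mathbf{1}_H=\sum_xd_H(x)^2$ both through the Bose--Mesner identity $A_1^2=(k-\mu)I+(\lambda-\mu)A_1+\mu J$ and by splitting the sum over $H$ and its complement, and combining with the edge-count $\sum_{x\notin H}d_H(x)=\mu v/2$ (which is automatic from (ii)) to deduce $\sum_{x\notin H}(d_H(x)-\mu)^2=0$.

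The main obstacle I anticipate is precisely this last step: nothing in (ii) directly constrains $d_H$ on $V\setminus H$, and the vanishing of the variance is a nontrivial algebraic cancellation that combines the identity for $A_1^2$ with the special value $\theta_2=k-2\mu$ characteristic of the two-graph setting $v=2(k-\theta_1)$. Once $d_H\equiv\mu$ off $H$ is in hand, the earlier analysis delivers $\chi\in E_2\mathbb{R}^V$, so $\mathbf{1}$ becomes a $\rho_2$-eigenvector of $S'$, completing (ii)$\Rightarrow$(i).
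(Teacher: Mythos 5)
Your proposal is correct, and the cancellation you single out as the main risk does go through: with $|H|=v/2$ and the induced subgraph on $H$ being $(k-\mu)$-regular, computing $\mathbf{1}_H^{T}A_1^{2}\mathbf{1}_H$ once via $A_1^{2}=(k-\mu)I+(\lambda-\mu)A_1+\mu J$ and once as $\sum_x d_H(x)^2$ reduces $\sum_{x\notin H}(d_H(x)-\mu)^2=0$ exactly to the identity $2k-v/2=\lambda+\mu$ (using $k(k-\lambda-1)=\mu(v-k-1)$ and $\mu>0$ from primitivity), which is precisely the hypothesis $v=2(k-\theta_1)$ in disguise. Your route, however, is genuinely different from the paper's. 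For (i)$\Rightarrow$(ii) the paper argues geometrically: Theorem \ref{key thm} shows the $E_2$-embedding lies on two parallel hyperplanes perpendicular to $D_Hj$, the spherical $1$-design property forces $|H|=v/2$, and then it invokes Bose--Shrikhande (Theorem \ref{BS-1}) to get that both induced subgraphs are regular, finishing with an edge count; for (ii)$\Rightarrow$(i) it uses the quotient matrix, tight interlacing (hence an equitable partition), and the cited fact that a regular graph in the switching class of a regular two-graph is strongly regular. You instead compress everything into the single eigenvector condition $A_1(\mathbf{1}-2\mathbf{1}_H)=\theta_2(\mathbf{1}-2\mathbf{1}_H)$, read off (ii) together with the companion condition $d_H(x)=\mu$ for $x\notin H$ entrywise, and replace the tight-interlacing/equitable step by the explicit variance computation (which is, in effect, the standard proof that tight interlacing forces equitability, done by hand). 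Your approach buys a self-contained matrix argument that avoids both the hyperplane lemma and the external citations, and it makes the hidden condition $d_H\equiv\mu$ on $V\setminus H$ explicit; the paper's version buys brevity by leaning on known switching results and stays consistent with its spherical-design theme. One point to tidy in a full write-up: rather than asserting the switched adjacency matrix has ``exactly'' three eigenvalues, it is cleaner to note that $(A'-\theta_1I)(A'-\theta_2I)$ vanishes on $\mathbf{1}^{\perp}$, hence equals $\frac{(k+c-\theta_1)(k+c-\theta_2)}{v}J$, which together with $A'\mathbf{1}=(k+c)\mathbf{1}$ yields strong regularity with parameters $(v,k+c,\lambda+c,\mu+c)$ directly, primitivity of $G$ (so $\theta_2\le -2$ and $c\neq 0$) ruling out the degenerate complete or disconnected cases.
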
 
For examples, the complements of $T(8)$ or the Chang graphs are able to apply Theorem \ref{main}. In particular, by Theorem \ref{main}, we can find at least $100000$ strongly regular graphs with parameters $(276,140,58,84)$. It is known that there are at least $7715$ strongly regular graphs with these parameters \cite{Haemers-Tonchev}.  We prove these theorems with the view point of the geometrical theory of the finite subset on the Euclidean unit sphere.

\section{Preliminaries} 
In this section, we introduce some basic terminology and results. More details may be found, for examples, in \cite{Bannai-Ito}, \cite{Brouwer}, \cite{Delsarte-Goethals-Seidel}, \cite{Lint-Seidel}, \cite{Seidel} and \cite{Spence}. 

\subsection{Regular two-graphs}
Let $V$ be a set of vertices, and $\Delta$ be a collection of $3$-subsets of $V$, where an $n$-subset means a subset whose cardinality is $n$. $(V,\Delta)$ is called a two-graph if each $4$-subset of $V$ contains an even number of elements of $\Delta$. A two-graph $(V,\Delta)$ is said to be regular if each $2$-subset of $V$ is contained in a constant number of $\Delta$. 

Given a simple graph $G=(V,E)$, the set $\Delta$ of $3$-subsets of $V$ whose induced subgraph in $G$ contains an odd number of edges give rise to a two-graph $(V,\Delta)$. In fact, every two-graph can be represented in this way. Switching from $G$ with respect to a subset $H \subset V$ consists of interchanging adjacency and non-adjacency between $H$ and its complement $V \setminus H$. Graphs $G_1=(V,E_1)$ and $G_2=(V,E_2)$ represent the same two-graph if they are related by the equivalence relation of switching with respect to some $H \subset V$. An equivalence class of graphs under switching is called a switching class. Thus, a two-graph can be identified with the switching class of a graph. 

The $(0,-1,1)$ adjacency matrix $B$ of a graph $G$ is defined by the matrix indexed by the vertices, whose $(x,y)$ entry is $-1$ if $x$ is adjacent to $y$, $1$ if $x$ is not adjacent to $y$, and $0$ if $x=y$. The $(0,-1,1)$ adjacency matrix of $S(G,H)$ is $D_H B D_H$, where $B$ is the $(0,-1,1)$ adjacency matrix of $G$ and $D_H$ is the diagonal matrix whose $(x,x)$ entry is $-1$ for $x \in H$ and $1$ for $x \in V \setminus H$. Because $B$ and $D_H B D_H$ have the same eigenvalues, the eigenvalues of a two-graph are the eigenvalues of the $(0,-1,1)$ adjacency matrix of any graph in its switching class. A two-graph $(V,\Delta)$ is regular if and only if it has two distinct eigenvalues $\rho_1>0>\rho_2$, where $\rho_1 \rho_2=1-|V|$. 

The switching class of a graph $G$, and hence any two-graph, can be represented geometrically as a set of equiangular lines. Let $-\rho<0$ be the smallest eigenvalue of the $(0,-1,1)$ adjacency matrix $B$ of $G$ on $v$ vertices, and suppose that $-\rho$ has multiplicity $v-d$. Then $\rho I+ B$ is positive semidefinite of rank $d$ and so can be represented as the Gram matrix of the inner products of $n$ vectors in Euclidean space $\mathbb{R}^d$, which implies the equiangular line having the same angle $\phi$ with $\cos \phi=1/\rho$. Here, the Gram matrix of a finite set $X \subset \mathbb{R}^d$ is indexed by $X$, and its $(x,y)$ entry is the usual inner product of $x$ and $y$. Conversely, given a set of $v$ nonorthogonal equiangular lines in $\mathbb{R}^d$, there exists a two-graph from which it can be constructed  by this method. The cardinality $v$ of such a set satisfies $v\leq d(\rho^2-1)/(\rho^2-d)$, and this bound is achieved if and only if the corresponding two-graph is regular. 

The matrix $ I+ \frac{1}{\rho} B$ is the Gram matrix of a finite set $X$ on $S^{d-1}$. Similarly, we can get the finite set $X_H \subset S^{d-1}$ with the Gram matrix $I+ \frac{1}{\rho} D_H B D_H$ from $S(G,H)$. 
We define the bijection $\varphi:V \rightarrow X$. The switching with respect to $H$ means that we move $\varphi(H)$ to the antipodal part $- \varphi(H)$ in the spherical embedding. Namely, we have $X_H=(X \setminus \varphi(H)) \cup (-\varphi(H))=\{x \in X \mid x \not\in \varphi(H) \} \cup \{-x \mid x \in \varphi(H)\}$.

\subsection{Embedding to the unit sphere}
Let $G=(V,E)$ be a primitive strongly regular graph. A primitive strongly regular graph is identified with a symmetric association scheme $(V,\{R_0,R_1,R_2\})$ with two classes, where $R_0=\{(x,x) \mid x \in V \}$ and $R_1=\{(x,y) \mid (x,y) \in E \}$. Let $A_i$ be the $(0,1)$ adjacency matrix with respect to the relation $R_i$, $E_i$ be the primitive idempotents, and $m_i$ be the rank of $E_i$. 
As is well known, the spherical embedding of $V$ with respect to $E_i$ $(i=1,2)$ in the unit sphere $S^{m_i-1}$ are defined as follows. We identify $x \in V$ with the vectors $\bar x=\sqrt{\frac{|V|}{m_i}} E_i e_x$, where $e_x= ^t(0,\ldots ,0,1,0, \ldots, 0) \in \mathbb{R}^v$ with the $x$-th coordinate $1$. If the strongly regular graph is primitive, then this embedding is faithful. The standard inner product $\langle \bar x, \bar y \rangle$ in $\mathbb{R}^{m_i}$ is given by $q_i(j)/m_i=p_j(i)/k_j$ if $(x,y) \in R_j$, where $p_j(i)$ and $q_i(j)$ are entries of the first and second eigenmatrix,  $m_i=q_i(0)$ and $k_j=p_j(0)$. Namely, this spherical embedding has the structure of the strongly regular graph. We know the properties of this embedding as $s$-distance sets and spherical $t$-designs.

We introduce the concept of $s$-distance sets and spherical $t$-designs. 
Let $X$ be a nonempty finite subset of $S^{d-1}$. Define $A(X):=\{\langle x,y \rangle \mid x,y \in X, x\ne y\}$, that is, the set of the standard inner products of distinct vectors of $X$. $X$ is called an $s$-distance set if $|A(X)|=s$. $X$ is called a spherical $t$-design on $S^{d-1}$, if $\sum_{x \in X} f(x)=0$ for any $f \in {\rm Harm}_l (\mathbb{R}^d)$ with $1\leq l \leq t$, where ${\rm Harm}_l (\mathbb{R}^d)$ is the linear space of harmonic homogeneous polynomials of degree $l$, with $d$ variables. 

If $X$ is an $s$-distance set and a spherical $t$-design, and $t \geq 2s-2$, then $(X,\{R_i\})$ is an association scheme of class $s$, where $R_i=\{(x,y) \in X \times X \mid \langle x,y \rangle = \alpha_i \}$, and $A(X)=\{\alpha_1,\alpha_2,\ldots , \alpha_s \}$ \cite{Delsarte-Goethals-Seidel}. 
In particular, $X$ is a $2$-distance set and a spherical $2$-design, then $X$ has the structure of a strongly regular graph. 

For a fixed $x \in X \subset S^{d-1}$, we define $n_i(x):=|\{y \in X \mid \langle x,y \rangle =\alpha_i \}|$.
If $t \geq s-1$, then $X$ is distance invariant, that is, $n_i(x)$ is a constant number $k_i$ for any $x\in X$ \cite{Delsarte-Goethals-Seidel}. 

Let $\varphi_i$ be the embedding bijection from the vertex set of a primitive strongly regular graph $G=(V,E)$ to $S^{m_i-1}$ with respect to $E_i$ $(i=1,2)$. $\frac{|V|}{m_i}E_i$ is the Gram matrix of the spherical embedding because $^tE_i E_i=E_i$. Since we can write $E_i:=\frac{1}{|V|} \sum_{j=0}^2 q_i(j)A_j$, $\varphi(V)$ is a $2$-distance set. Moreover, it is known that $\varphi_i(V)$ is a spherical $2$-design on $S^{m_i-1}$ \cite{Cameron-Goethals-Seidel}.

\section{Known switching theorems}
Bose and Shrikhande proved the following theorem in $1970$.

\begin{theorem}[Theorem 8.1 in \cite{Bose-Shrikhande}] \label{BS-1}
	Let $G=(V,E)$ be a strongly regular graph with the parameters $(v,k,\lambda,\mu)$ where $2 k -v/2 =\lambda + \mu$. 	
	Let $H_1$ be a subset of $V$, and $H_2:=V \setminus H_1$. Let $v_i$ be the cardinalities of $H_i$. 
	Then, the following are equivalent. 
	
		{\rm (i)} $S(G,H_1)$ is strongly regular. 
		
		{\rm (ii)} The subgraph induced by $H_1$ is $w_1$ regular and the subgraph induced by $H_2$ is $w_2$ regular where 
		\[
		w_1-w_2=\frac{v_1-v_2}{2}.
		\]
\end{theorem}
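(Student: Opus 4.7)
The plan is to reduce the claim to a degree computation combined with the classical fact that a regular graph in the switching class of a regular two-graph is automatically strongly regular. First I would translate the parametric hypothesis $2k-v/2=\lambda+\mu$ into spectral language. Substituting $\theta=k-v/2$ into the quadratic $\theta^2-(\lambda-\mu)\theta+(\mu-k)=0$ satisfied by the nontrivial eigenvalues $\theta_1,\theta_2$ and using the standard identity $k(k-\lambda-1)=(v-k-1)\mu$, the hypothesis becomes $v=2(k-\theta_i)$ for some $i\in\{1,2\}$. Comparing with the spectrum $\{v-1-2k,\, -(1+2\theta_1),\, -(1+2\theta_2)\}$ of the $(0,-1,1)$ adjacency matrix $B$ of $G$, this is exactly the condition that $B$ has only two distinct eigenvalues, i.e.\ $G$ lies in the switching class of a regular two-graph.

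For the direction (ii)$\Rightarrow$(i), I would compute directly that for $x\in H_i$ the degree of $x$ in $S(G,H_1)$ equals $2w_i(x)+v_{3-i}-k$, where $w_i(x)$ denotes its degree in the subgraph induced by $H_i$. Hence regularity of both induced subgraphs, combined with $w_1-w_2=(v_1-v_2)/2$, forces $S(G,H_1)$ to be regular of some common degree $k'$. The $(0,-1,1)$ adjacency matrix of $S(G,H_1)$ is $B'=D_{H_1}BD_{H_1}$, which is conjugate to $B$ and thus shares its two eigenvalues $\rho_1,\rho_2$. Writing $B'=J-I-2A'$, where $A'$ is the ordinary adjacency matrix of $S(G,H_1)$, the $k'$-regularity makes $\mathbf{1}$ an eigenvector of both $B'$ and $A'$; on $\mathbf{1}^\perp$ the spectra of $A'$ and $B'$ are related by $\theta\mapsto -(1+2\theta)$, so $A'$ has at most three distinct eigenvalues in total, whence $S(G,H_1)$ is strongly regular.

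For (i)$\Rightarrow$(ii), strong regularity of $S(G,H_1)$ in particular gives a common degree $k''$ at every vertex, and the degree formula $2w_i(x)+v_{3-i}-k=k''$ immediately forces each $w_i(x)$ to be constant in $x\in H_i$, with the gap $w_1-w_2=(v_1-v_2)/2$.

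The main obstacle, I expect, will not be any single calculation but rather two small bookkeeping checks: verifying the equivalence of the two forms of the hypothesis via the quadratic identity, and justifying cleanly that ``$B$ has two eigenvalues and $S(G,H_1)$ is regular'' implies strong regularity, with particular attention to the possibly imprimitive case in which $A'$ would have only two distinct eigenvalues. Once the switching-preserves-two-graph viewpoint is in place, everything else is routine.
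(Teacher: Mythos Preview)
Your argument is correct, but note that the paper does not itself prove this statement: Theorem~\ref{BS-1} is quoted without proof as Theorem~8.1 of Bose--Shrikhande \cite{Bose-Shrikhande}, so there is no ``paper's own proof'' to compare against.

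Your route is nonetheless the natural one and dovetails with the background the paper assumes. The degree formula $\deg_{S(G,H_1)}(x)=2w_i(x)+v_{3-i}-k$ for $x\in H_i$ is correct and immediately yields (i)$\Rightarrow$(ii) as well as the regularity half of (ii)$\Rightarrow$(i). The spectral reformulation of the hypothesis---$2k-v/2=\lambda+\mu$ iff $v=2(k-\theta_i)$ for some $i$, iff the Seidel matrix $B$ has exactly two eigenvalues---is precisely what the paper records (citing \cite{Brouwer-Cohen-Neumaier}) in the paragraph following Theorem~\ref{BS-2}. The closing implication ``$S(G,H_1)$ regular and in the switching class of a regular two-graph $\Rightarrow$ strongly regular'' is likewise invoked by the paper, with a citation to \cite{Goethals-Seidel}, in the alternative (interlacing) proof of Theorem~\ref{main}. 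Your caution about the imprimitive case is harmless: if $A'$ ends up with only two eigenvalues the switched graph is a disjoint union of equal cliques or its complement, which is still strongly regular under the usual convention.
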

Note that if there exists nonempty $H$ such that $S(G,H)$ is strongly regular, then $G$ has the condition $2 k -v/2 =\lambda + \mu$ \cite{Bose-Shrikhande}. In particular, when $S(G,H)$ is strongly regular with the same parameters, we have the following theorem. 

\begin{theorem}[Theorem 8.3 in \cite{Bose-Shrikhande}] \label{BS-2}
	Let $G=(V,E)$ be a strongly regular graph with the parameters $(v,k,\lambda,\mu)$ where $2 k -v/2 =\lambda + \mu$. 	
	Let $H_1$ be a subset of $V$, and $H_2:=V \setminus H_1$. 
	Then, the following are equivalent. 
	
		{\rm (i)} $S(G,H_1)$ is strongly regular. 
		
		{\rm (ii)} In $G$ each vertex in $H_1$ is adjacent to exactly half of vertices in $H_2$, and each vertex in $H_2$ is adjacent to exactly half of vertices in $H_2$. 
\end{theorem}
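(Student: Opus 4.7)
The plan is a direct combinatorial computation of degrees and common-neighbour counts in $S(G,H_1)$, exploiting the identity $2k-v/2=\lambda+\mu$ at the one point where it is actually needed. Write $v_i=|H_i|$ and, for a vertex $x$, let $d_{H_j}(x)$ denote the number of neighbours of $x$ in $H_j$. Switching preserves adjacency inside each part $H_i$ and inverts adjacency between $H_1$ and $H_2$, so everything in what follows is a bookkeeping of these two effects. For concreteness I read (ii) in its natural form, namely $d_{H_2}(x)=v_2/2$ for every $x\in H_1$ and $d_{H_1}(y)=v_1/2$ for every $y\in H_2$.

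For the implication $(\mathrm{ii})\Rightarrow(\mathrm{i})$, I would first check $k$-regularity of $S(G,H_1)$: for $x\in H_1$ the new degree equals $d_{H_1}(x)+(v_2-d_{H_2}(x))=k+v_2-2(v_2/2)=k$, and symmetrically for $x\in H_2$. The main step is to compute, for each pair $\{x,y\}$, the number $c'(x,y)$ of common neighbours in $S(G,H_1)$ in three cases. When $x,y\in H_i$, a vertex $z$ is a common neighbour in $S(G,H_1)$ exactly when either $z\in H_i$ is a common neighbour in $G$, or $z\in H_{3-i}$ is a non-neighbour of both in $G$; inclusion--exclusion combined with $(\mathrm{ii})$ collapses this to $c'(x,y)=c_G(x,y)$, and adjacency inside $H_i$ is unchanged, so the counts $\lambda$ and $\mu$ are preserved. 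When $x\in H_1$ and $y\in H_2$, an analogous case split yields $c'(x,y)=d_{H_1}(x)+d_{H_2}(y)-c_G(x,y)$; substituting $d_{H_1}(x)=k-v_2/2$ and $d_{H_2}(y)=k-v_1/2$ gives $c'(x,y)=2k-v/2-c_G(x,y)=(\lambda+\mu)-c_G(x,y)$. Because the adjacency between $x$ and $y$ is inverted by the switching, this equals $\mu$ when $x,y$ were adjacent in $G$ and $\lambda$ otherwise, which is precisely what an SRG with parameters $(v,k,\lambda,\mu)$ requires.

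For $(\mathrm{i})\Rightarrow(\mathrm{ii})$, the hypothesis that $S(G,H_1)$ is strongly regular with the same parameters forces $k$-regularity, and the new-degree identity, now read as an equation in the unknown $d_{H_2}(x)$ for $x\in H_1$, gives $d_{H_2}(x)=v_2/2$; symmetrically $d_{H_1}(y)=v_1/2$ for $y\in H_2$, which is $(\mathrm{ii})$.

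The only real work is the bookkeeping of the three partition cases in the $c'(x,y)$ computation, and I do not expect a serious obstacle. The parameter identity $2k-v/2=\lambda+\mu$ is used exactly once, to make the ``mixed'' case $x\in H_1$, $y\in H_2$ produce $\lambda$'s and $\mu$'s in the correct positions; the ``same side'' cases are self-adjusting because $(\mathrm{ii})$ simultaneously supplies $d_{H_2}=v_2/2$ on $H_1$ and $d_{H_1}=v_1/2$ on $H_2$.
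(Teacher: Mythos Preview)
The paper does not supply its own proof of this theorem: it is quoted as Theorem~8.3 of Bose--Shrikhande and left unproved. What the paper actually proves is the reformulation Theorem~\ref{main} (and, by a separate argument communicated by Brouwer, the same result via quotient matrices and tight interlacing). So there is no ``paper's proof'' of Theorem~\ref{BS-2} to compare against directly; the natural comparison is with the paper's proof of Theorem~\ref{main}.

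Your combinatorial argument is correct. The inclusion--exclusion in the ``same side'' case collapses exactly because $v_2-d_{H_2}(x)-d_{H_2}(y)=0$ under (ii), and the ``mixed'' case uses $2k-v/2=\lambda+\mu$ precisely as you say. For $(\mathrm i)\Rightarrow(\mathrm{ii})$ you tacitly read (i) as ``strongly regular with the \emph{same} parameters''; this is the intended reading (the paper's sentence introducing Theorem~\ref{BS-2} says so), and indeed the literal reading would be false in view of Theorem~\ref{main2}. It is worth making this explicit.

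Your route is genuinely different from the paper's. The paper embeds $V$ on the sphere via the idempotent $E_2$, observes that the image is a $2$-distance set and a spherical $2$-design, and then shows that $S(G,H)$ is strongly regular with the same parameters iff $\varphi(H)$ is itself a spherical $1$-design; the regularity constant $k-(v-h)/2$ drops out of the vanishing of $\sum_{x,y\in\varphi(H)}\langle x,y\rangle$. The alternative proof it records uses eigenvalue interlacing: the quotient matrix of the partition $\{H,V\setminus H\}$ has eigenvalues $k$ and $k-v/2=\theta_1$, so the interlacing is tight and the partition is equitable. Both of the paper's arguments give something your direct count does not, namely the one-sided condition of Theorem~\ref{main}: regularity of the subgraph on $H$ alone already forces the partition to be equitable, so the $H_2$-half of (ii) is automatic. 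Conversely, your argument is entirely elementary, needs no spectral or design-theoretic input, and makes transparent exactly where the identity $2k-v/2=\lambda+\mu$ enters.
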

It is known that $G$ is a strongly regular graph with $2k-v/2=\lambda+\mu$ if and only if $G$ is a strongly regular graph with $v=2(k-\theta_1)$ or $v=2(k-\theta_2)$ \cite{Brouwer-Cohen-Neumaier}. If $G$ has the condition $v=2(k-\theta_2)$, then the complements $\bar{G}$ has the condition $v=2(k-\bar{\theta_1})$ where $\bar{\theta_1}$ is the positive eigenvalue of $\bar{G}$. Therefore, without loss of generality, we may assume $v=2(k-\theta_1)$. Hence, Theorem \ref{main} is the simplification of Theorem \ref{BS-2}. The switching class of a regular two-graph may contain strongly regular graphs with at most two parameters sets \cite{Brouwer-Cohen-Neumaier, Brouwer-Haemers}. Theorem \ref{main2} is the simplification of Theorem \ref{BS-1} in the case where $S(G,H)$ has the other parameters. 

\section{Proof of Theorem \ref{main}}
Let $G=(V,E)$ be a primitive strongly regular graph with parameters $(v,k, \lambda, \mu)$. Let 
$k$, $\theta_1$ and $\theta_2 $ be the eigenvalues of the $(0,1)$ adjacency matrix of $G$, where $k> \theta_1>0> \theta_2$. $G$ is identified with an association scheme $(V,\{R_0,R_1,R_2\})$, where $A_1$ is the $(0,1)$ adjacency matrix of $G$. Then, we can write the first eigenmatrix 
	\[
	P=\left[ \begin{array}{ccc}
	1& k        & v-1-k\\
	1& \theta_1 & -1-\theta_1\\
	1& \theta_2 & -1-\theta_2\\
	\end{array} \right]
	\]
where 
	\[
	\{\text{$\theta_1 $},\text{$\theta_2 $}\}=\left\{\frac{\lambda -\mu +\sqrt{(\lambda -\mu )^2-4(\mu -k)}}{2},\frac{\lambda -\mu -\sqrt{(\lambda -\mu )^2-4(\mu -k)}}{2}\right\}.
	\]
Assume $v=2(k-\theta_1)$. Then, we can determine $\theta_1=k-v/2$ and $\theta_2=\lambda-\mu-k+v/2=k-2 \mu$. Since $\theta_1>0$, we have $k > \frac{v}{2}$. Let $B$ be the $(0,-1,1)$ adjacency matrix of $G$, and $-\rho<0$ be the minimum eigenvalues of $B$. Since $B=-A_1+A_2$, the eigenvalues of $B$ are $-p_1(j)+p_2(j)$ with $j=0,1,2$. Because we have $-p_1(0)+p_2(0)=k-(v-1-k)=-1-2\theta_1$, the eigenvalues of $B$ are $-1-2\theta_1$ and $-1-2\theta_2$. Therefore, this strongly regular graph is obtained in the switching class of a regular two-graph. Then, $-\rho=-1-2\theta_1$.

Let $\varphi_2$ be the spherical embedding bijection from $V$ to $S^{m_2-1}$ with respect to $E_2$. The following is a key lemma to prove the Theorem \ref{main}. 

	\begin{lemma} \label{lemma}
	Let $G=(V,E)$ be a strongly regular graph with parameters $(v,k,\lambda,\mu)$, where  $v=2(k-\theta_1)$ (resp.\ $v=2(k-\theta_2)$). 
	Let $B$ be the $(0,-1,1)$ adjacency matrices of $G$, and $-\rho_1<0$ (resp.\ $\rho_2>0$) be the minimum (resp.\ maximum) eigenvalue of $B$. 
	Let $X$ be a finite set with the Gram matrix $I+\frac{1}{\rho_1}B$ (resp.\ $I-\frac{1}{\rho_2}B$).
	Then, $X$ coincides with the spherical embedding with respect to $E_2$ (resp.\ $E_1$).
	\end{lemma}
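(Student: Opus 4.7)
The plan is to expand $B$ in the basis of primitive idempotents $\{E_0, E_1, E_2\}$ of the Bose--Mesner algebra, so that the spectral decomposition of $I + \frac{1}{\rho_1}B$ can be read off and compared with the Gram matrix $\frac{v}{m_2}E_2$ of the embedding with respect to $E_2$. Writing $A_2 = J - I - A_1$ gives $B = -A_1 + A_2 = J - I - 2A_1$, and since $J = vE_0$, $I = E_0+E_1+E_2$, and $A_1 = kE_0 + \theta_1 E_1 + \theta_2 E_2$, this immediately expresses $B$ as a linear combination of $E_0, E_1, E_2$ with explicit coefficients $v-1-2k$, $-1-2\theta_1$, $-1-2\theta_2$.

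The hypothesis $v = 2(k-\theta_1)$ is precisely the algebraic identity $v-1-2k = -1-2\theta_1$, which forces the $E_0$- and $E_1$-coefficients of $B$ to coincide. Hence $B = -\rho_1(E_0+E_1) + (-1-2\theta_2)E_2$ with $\rho_1 = 1+2\theta_1$, and since $\theta_1 > 0 > \theta_2$ this is visibly the smallest eigenvalue of $B$. A direct substitution then produces
\[
I + \frac{1}{\rho_1}B \;=\; E_2 + \frac{-1-2\theta_2}{1+2\theta_1}\,E_2 \;=\; \frac{2(\theta_1-\theta_2)}{1+2\theta_1}\,E_2,
\]
a nonnegative scalar multiple of $E_2$ as required.

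The only remaining point is to identify this scalar as $v/m_2$. For that I would invoke the standard relations $1+m_1+m_2 = v$ and $k + m_1\theta_1 + m_2\theta_2 = 0$ (trace of $A_1$), which give $m_2 = \bigl((v-1)\theta_1 + k\bigr)/(\theta_1-\theta_2)$; the equality $v/m_2 = 2(\theta_1-\theta_2)/(1+2\theta_1)$ is then a one-line rearrangement that once again reduces to $v = 2(k-\theta_1)$. The companion case $v = 2(k-\theta_2)$ is handled by the symmetric calculation using $I - \frac{1}{\rho_2}B$ with the roles of the two nontrivial eigenspaces interchanged, or equivalently by passing to the complementary graph.

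No step in this plan is deep: the whole argument is spectral bookkeeping inside the three-dimensional Bose--Mesner algebra, and I expect no genuine obstacle. The one place that requires care is the sign convention --- one must check that among the three candidate eigenvalues $v-1-2k$, $-1-2\theta_1$, $-1-2\theta_2$ of $B$ it really is $-(1+2\theta_1)$ that serves as the minimum $-\rho_1$ --- but this is transparent once the hypothesis has collapsed the $E_0$- and $E_1$-coefficients.
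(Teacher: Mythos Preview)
Your argument is correct and is essentially the same as the paper's: both expand $B=-A_1+A_2$ in the idempotent basis and use the hypothesis $v=2(k-\theta_1)$ to collapse the $E_0$- and $E_1$-coefficients, leaving $I+\frac{1}{\rho_1}B$ as a scalar multiple of $E_2$. The only difference is cosmetic---the paper phrases this as $E_i(\rho_1 I+B)=0$ for $i=0,1$ and reads off the scalar from the diagonal (each diagonal entry of $I+\frac{1}{\rho_1}B$ is $1$ while that of $E_2$ is $m_2/v$), whereas you compute the spectral coefficients explicitly and then verify the scalar via the multiplicity relations; the latter is a slightly longer route to the same endpoint.
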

	
	\begin{proof}
	Suppose $G=(V,E)$ has the condition $v=2(k-\theta_1)$.
	Let $A_i$ and $E_i$ be defined above. 
	Note that for $i=0,1$, 
		\[
		E_i (\rho_1 I + B)=E_i(\rho_1 I -A_1+A_2) =\rho_1 E_i - p_1(i) E_i+p_2(i) E_i=0. 
		\]
	Therefore, $I + \frac{1}{\rho_1}B$ is equal to $\frac{m_2}{v}E_2$. We can prove the case $v=2(k-\theta_2)$ in the same manner.
	\end{proof}
By Lemma \ref{lemma}, the following is clear. 
	\begin{corollary} \label{coro}
	Let $X$ be the finite set defined in Lemma \ref{lemma}.
	Then, $X$ is a $2$-distance set and a spherical $2$-design on $S^{m_2-1}$ (resp.\ $S^{m_1-1}$), where $m_2$ (resp.\ $m_1$) is the multiplicity of the negative (resp.\ non trivial positive) eigenvalue of $A_1$.
	\end{corollary}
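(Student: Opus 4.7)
The plan is to deduce the corollary directly from Lemma \ref{lemma} together with the facts about spherical embeddings of primitive strongly regular graphs that were recorded at the end of Section~2. By Lemma \ref{lemma}, the Gram matrix of $X$ equals $\frac{m_2}{v}E_2$ (respectively $\frac{m_1}{v}E_1$), so $X$ is, up to identification, the image $\varphi_2(V)$ (respectively $\varphi_1(V)$) of the canonical spherical embedding with respect to the primitive idempotent $E_2$ (respectively $E_1$). Hence it suffices to reproduce, in the present context, the two properties of this embedding.

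For the $2$-distance property I would argue as follows. Using the expansion $E_i = \frac{1}{v}\sum_{j=0}^{2} q_i(j) A_j$, the inner product $\langle \bar x,\bar y\rangle$ for $(x,y)\in R_j$ equals $q_i(j)/m_i$. The value at $j=0$ gives $\|\bar x\|^2 = q_i(0)/m_i = 1$, confirming that $X \subset S^{m_i-1}$; the values at $j=1,2$ are the two inner products realized by distinct points. These two numbers are distinct because $G$ is primitive: if $q_i(1)=q_i(2)$ for $i\in\{1,2\}$, then the relation $(E_i)_{xy}$ would not distinguish $R_1$ from $R_2$, forcing a linear dependence among $I$, $A_1$, $A_2$ that contradicts the fact that the Bose--Mesner algebra has dimension $3$. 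Thus $|A(X)|=2$, i.e.\ $X$ is a $2$-distance set.

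For the spherical $2$-design property I would invoke the Cameron--Goethals--Seidel result already cited in Section~2: the embedding of a primitive strongly regular graph via any nontrivial primitive idempotent yields a spherical $2$-design on the corresponding unit sphere. An alternative self-contained verification is to check that $\sum_{x\in X} x = 0$ (which follows from $E_i J = 0$ for $i\ne 0$, using that $J/v = E_0$ is orthogonal to $E_i$) and that $\sum_{x\in X} x\,^t\! x = (v/m_i)\,I_{m_i}$ (which follows from $E_i^2 = E_i$ together with $\operatorname{rank} E_i = m_i$); these two identities are exactly the defining conditions for a spherical $2$-design.

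There is no real obstacle here: the content of the corollary is essentially a repackaging of Lemma \ref{lemma} combined with facts from Section~2. The only point deserving care is verifying that $q_i(1)\ne q_i(2)$ so that $X$ is genuinely a $2$-distance set (and not a $1$-distance set); as noted above, this is guaranteed by the standing primitivity hypothesis on $G$.
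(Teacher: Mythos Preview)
Your proposal is correct and follows exactly the route the paper intends: the paper's own ``proof'' is the single sentence ``By Lemma \ref{lemma}, the following is clear,'' and you have simply unpacked that sentence by identifying $X$ with the canonical spherical embedding $\varphi_i(V)$ and then quoting the Section~2 facts (the expansion $E_i=\frac{1}{v}\sum_j q_i(j)A_j$ for the $2$-distance property, and the Cameron--Goethals--Seidel result for the $2$-design property). One cosmetic slip: the Gram matrix of the embedding is $\frac{v}{m_i}E_i$, not $\frac{m_i}{v}E_i$ (the diagonal must equal $1$); the paper itself contains the same misprint in the proof of Lemma~\ref{lemma}.
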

Now we prove Theorem \ref{main}.
	\begin{proof}[Proof of Theorem \ref{main}]
	Let $G=(V,E)$ be a primitive strongly regular graph with parameters $(v,k,\lambda,\mu)$, where  $v=2(k-\theta_1)$. Let $B$ be the $(0,-1,1)$ adjacency matrix of $G$, $-\rho$ be the minimum eigenvalue of $B$, and $v-m_2$ is the multiplicity of $-\rho$. Let $X$ be a finite set on $S^{m_2-1}$ with the Gram matrix $I+\frac{1}{\rho}B$. Then, $X$ is a $2$-distance set and a spherical $2$-design on $S^{m_2-1}$ by Corollary \ref{coro}. Let $\varphi$ be the spherical embedding bijection $V \rightarrow X$ with respect to $E_2$.
	
	First, suppose $S(G,H)=(V_H,E_H)$ is a strongly regular graph with the same parameters $(v,k,\lambda,\mu)$. Then, the $(0,-1,1)$ adjacency matrix of $S(G,H)$ is $D_H B D_H$, where $D_H$ is defined above. Let $X_H$ be the finite set on $S^{m_2-1}$ with the Gram matrix $I+\frac{1}{\rho} D_H B D_H$. Similarly, $X_H$ is a $2$-distance set and a spherical $2$-design on $S^{m_2-1}$. Note that $X_H=(X \setminus \varphi(H)) \cup (-\varphi(H))$. 
	
	Since $X$ is a spherical $2$-design, for any $f_1\in {\rm Harm}_1(\mathbb{R}^{m_2})$, 
		\begin{equation} \label{eq1}
		0=\sum_{x \in X} f_1(x)=\sum_{x \in X \setminus \varphi(H)}f_1(x)+\sum_{x \in \varphi(H)}f_1(x).
		\end{equation}
	On the other hand, for any $f_1\in {\rm Harm}_1(\mathbb{R}^{m_2})$, 
		\begin{equation} \label{eq2}
		0=\sum_{x \in X_H} f_1(x)=\sum_{x \in X \setminus \varphi(H)}f_1(x)+\sum_{x \in -\varphi(H)}f_1(x)=\sum_{x \in X \setminus \varphi(H)}f_1(x)-\sum_{x \in \varphi(H)}f_1(x)
		\end{equation}
	because $X_H$ is a spherical $2$-design and $f_1$ is a homogeneous polynomial of degree $1$. 
	By equations (\ref{eq1}) and (\ref{eq2}), we have $\sum_{x \in \varphi(H)}f_1(x)=0$ for any $f_1\in {\rm Harm}_1(\mathbb{R}^{m_2})$. Therefore, $\varphi(H)$ is a spherical $1$-design. Since $\varphi(H)$ is a $1$- or $2$-distance set and a spherical $1$-design, $\varphi(H)$ is distance invariant. Thus, the subgraph induced by $H$ is $n$ regular for some $n$. It is known that $\varphi(H)$ is a spherical $1$-design if and only if $\sum_{x \in \varphi(H)} \sum_{y \in \varphi(H)} \langle x,y \rangle=0$. Therefore, $1-\frac{1}{\rho}n+\frac{1}{\rho}(h-n-1)=0$, and hence $n=\frac{h-1+\rho}{2}=k-\frac{v-h}{2}$. 

	Second, suppose the subgraph induced by $H \subset V$ is $k-\frac{v-h}{2}$ regular. Clearly, $\sum_{x \in \varphi(H)} \sum_{y \in \varphi(H)} \langle x,y \rangle=0$. Therefore, $\varphi(H)$ is a spherical $1$-design on $S^{m_2-1}$. 
	
	Let $X_H$ be the finite set on $S^{m_2-1}$ with the Gram matrix $I+\frac{1}{\rho} D_H B D_H$. 
	Then, $X_H:=(X \setminus \varphi(H)) \cup (-\varphi(H))$, and $X_H$ has the structure of $S(G,H)$. 
	
	For any $f_1 \in {\rm Harm}_1(\mathbb{R}^{m_2})$, 
		\begin{equation}
		\sum_{x \in X_H}f_1(x)=\sum_{x \in X \setminus \varphi(H)}f_1(x) + \sum_{x \in -\varphi(H)} f_1(x)=\sum_{x \in X \setminus \varphi(H)}f_1(x) + \sum_{x \in \varphi(H)} f_1(x)=\sum_{x \in X}f_1(x)=0,
		\end{equation}
	because $\varphi(H)$ and $X$ are spherical $1$-designs.
	For any $f_2 \in {\rm Harm}_2(\mathbb{R}^{m_2})$, 
		\begin{equation}
		\sum_{x \in X_H}f_2(x)=\sum_{x \in X \setminus \varphi(H)}f_2(x) + \sum_{x \in -\varphi(H)} f_2(x)=\sum_{x \in X \setminus \varphi(H)}f_2(x) + \sum_{x \in \varphi(H)} f_2(x)=\sum_{x \in X}f_2(x)=0
		\end{equation}
	because $X$ is a spherical $2$-design on $S^{m_2-1}$, and $f_2$ is a homogeneous polynomial of degree $2$. Therefore, $X_H$ is a spherical $2$-design on $S^{m_2-1}$. Since $X_H$ is a $2$-distance set and a spherical $2$-design on $S^{m_2-1}$, $S(G,H)$ is a strongly regular graph. Since $X_H$ is a spherical $1$-design and $A(X_H)=A(X)$, $S(G,H)$ is $k$ regular. This implies that $S(G,H)$ has the same parameters $(v,k,\lambda,\mu)$.   
	\end{proof} 
We give some remarks of Theorem \ref{main}. 

Since $k-\frac{v-h}{2}$ is an integer, $h \equiv v \mod 2$. 

Suppose $H_1 \subset V$ and $H_2 \subset V$ hold the conditions in Theorem \ref{main}, (ii). 
Let $S_1$ and $S_2$ be the subgraph induced by $H_1$ and $H_2$, respectively. 
If there exists $g \in {\rm Aut}(G)$, such that ${S_1}^g=S_2$, then $S(G,H_1)$ is isomorphic to $S(G,H_2)$. 

$S(G,H_1)$ may be isomorphic to $S(G,H_2)$, even when there does not exist $g \in {\rm Aut}(G)$, such that ${S_1}^g=S_2$. 

We introduce another proof of Theorem \ref{main}. The author got this proof from a personal communication by A.E.\ Brouwer \cite{Brouwer-p,Muzychuk-Klin}. 

Let $G=(V,E)$ is a strongly regular graph defined in Theorem \ref{main}. 

First, we suppose $S(G,H)$ is a strongly regular graph with the same parameters. Let $A$ be the $(0,1)$ adjacency matrix of $G$, which is partitioned according to $\{H,V \setminus H\}$. Namely, 
	\[
	A=\left[ \begin{array}{cc}
	A_H & C \\
	^tC   & A_{V \setminus H} 
	\end{array} \right]
	=
	\left[ \begin{array}{cc}
	A_{1,1} & A_{1,2} \\
	A_{2,1}   & A_{2,2} 
	\end{array} \right]
	\]
where $A_H$ ($=A_{1,1}$) is the $(0,1)$ adjacency matrix of the subgraph induced by $H$, and $A_{V\setminus H}$ ($=A_{2,2}$) is that by $V \setminus H$. Switching with respect to $H$ implies replacing $C$ to $J-C$ in $A$.  Since $S(G,H)$ is also $k$ regular, the number of entries $1$ in $C$ is equal to that in $J-C$. Thus, the number of entries $1$ in $C$ is equal to $h(v-h)/2$. Let $f_{i,j}$ denote the average row sum of $A_{i,j}$. Then, $F=(f_{i,j})$ is called the quotient matrix. Since  the  number of entries $1$ in $C$ is $h(v-h)/2$, we can get 
	\[
	F=\left[ \begin{array}{cc}
	k-\frac{v-h}{2} & \frac{v-h}{2} \\
	\frac{h}{2}   & k-\frac{h}{2} 
	\end{array} \right].
	\]
The eigenvalues of $F$ are $k$ and $k-v/2$, because the row sums are $k$ and the trace is $2k -v/2$. It is known that the eigenvalues of $A$ interlace the eigenvalues of $F$ \cite{Brouwer-Cohen-Neumaier}. Namely, $k \geq \theta_1 \geq k-\frac{v}{2} \geq \theta_2$.  Since $v=2(k -\theta_1)$, the interlacing is tight ({\it i.e.}\ $\theta_1=k-v/2$). Therefore, this partition is equitable ({\it i.e.}\ the row sum of each $A_{i,j}$ is constant), namely, the subgraph induced by $H$ is $k-(v-h)/2$ regular \cite{Brouwer-Cohen-Neumaier}.  

Second, suppose the subgraph induced by $H$ is $k-(v-h)/2$ regular. Then, the quotient matrix $F$ is the same above. Hence, the interlacing of eigenvalues of $A$ and $F$ is tight, and hence the partition is equitable. Therefore, $S(G,H)$ is regular, and hence $S(G,H)$ is a strongly regular graph \cite{Goethals-Seidel}. Moreover $S(G,H)$ has the same parameters as that of $G$.   
  
\section{Proof of Theorem \ref{main2}}
The following is a key result in order to prove Theorem \ref{main2}.

\begin{theorem} \label{key thm}
	Let $G=(V,E)$ be a strongly regular graph with $v=2(k-\theta_1)$. 
	If there are $H \subset V$ such that $S(G,H)$ is a strongly regular graph with the other parameters. 
	Then, the spherical embedding with respect to $E_2$ is on two parallel hyperplanes of dimension at most $m_2-1$. 
\end{theorem}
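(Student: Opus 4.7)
The plan is to recast the hypothesis ``$S(G,H)$ is strongly regular with parameters different from those of $G$'' as an eigenspace condition on $\chi_H$ inside the Bose--Mesner algebra, and then to read off the two parallel hyperplanes from a single inner-product computation. The first step is to identify which eigenvalue of $B$ accommodates the all-ones vector after switching. Since $B' := D_H B D_H$ is the $(0,-1,1)$ adjacency matrix of $S(G,H)$, its nontrivial eigenvalues are the same as those of $B$, namely $-\rho = -1-2\theta_1$ and $-1-2\theta_2$. Regularity of $S(G,H)$ gives $B'\mathbf{1} = (v-1-2k')\mathbf{1}$; under $v=2(k-\theta_1)$, the equality $v-1-2k' = -\rho$ would force $k' = k$ and hence (with the fixed $\theta_1,\theta_2$) identical parameters. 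The ``different parameters'' hypothesis therefore pins down $v-1-2k' = -1-2\theta_2$, placing $\mathbf{1}$ in the $(-1-2\theta_2)$-eigenspace of $B'$.

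Pulling this through the conjugation by $D_H$, the identity $D_H B D_H\mathbf{1} = (-1-2\theta_2)\mathbf{1}$ gives $B(D_H\mathbf{1}) = (-1-2\theta_2)(D_H\mathbf{1})$, so $D_H\mathbf{1} = \mathbf{1} - 2\chi_H$ lies in the $E_2$-eigenspace of $B$, i.e.\ in $\mathrm{Im}(E_2)$. Decomposing along the primitive idempotents $E_0, E_1, E_2$ and using $E_0\mathbf{1} = \mathbf{1}$, $E_1\mathbf{1} = E_2\mathbf{1} = 0$, this single membership forces
\[
|H| = v/2, \qquad E_1\chi_H = 0, \qquad E_2\chi_H = \chi_H - \tfrac12\mathbf{1}.
\]

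For the final step, I use $w := E_2\chi_H = \chi_H - \tfrac12\mathbf{1} \in \mathrm{Im}(E_2)$ as the common normal. The $E_2$-embedding places $y \in V$ at $\bar y = \sqrt{v/m_2}\, E_2 e_y$ in the $m_2$-dimensional subspace $\mathrm{Im}(E_2)$. Using $E_2^T = E_2$ and $E_2 w = w$,
\[
\langle \bar y, w\rangle = \sqrt{v/m_2}\,\langle E_2 e_y, w\rangle = \sqrt{v/m_2}\,\langle e_y, w\rangle = \sqrt{v/m_2}\, w_y = \pm\tfrac12\sqrt{v/m_2},
\]
with the sign determined by whether $y \in H$. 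Hence $\varphi_2(V)$ lies on the two parallel affine hyperplanes of $\mathrm{Im}(E_2)$ cut out by $\langle z, w\rangle = \pm\tfrac12\sqrt{v/m_2}$, each of affine dimension at most $m_2-1$; since $|H|$ and $|V\setminus H|$ are both $v/2 > 0$, both hyperplanes are occupied.

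The only real obstacle is the first step: one must carefully rule out the ``same parameters'' case, in which $\mathbf{1}$ would remain in the $(-\rho)$-eigenspace of $B'$ and $D_H\mathbf{1}$ would sit in the $E_0\oplus E_1$-eigenspace of $B$, yielding $E_2\chi_H = 0$ and a trivial $w$. The hypothesis that the new parameters differ from those of $G$ is precisely what forces $D_H\mathbf{1}$ into the $E_2$-eigenspace of $B$; once that eigenspace membership is in hand, the hyperplane decomposition follows essentially from the observation that $\chi_H - \mathbf{1}/2$ is already a witness vector in $\mathrm{Im}(E_2)$ whose coordinates take only the two values $\pm 1/2$.
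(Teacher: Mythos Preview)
Your argument is correct, and reaches the same destination as the paper --- namely that $D_H\mathbf{1}\in\operatorname{Im}(E_2)$, which then serves as the common normal to the two parallel hyperplanes --- but by a more direct route. The paper works inside the Bose--Mesner algebra of $S(G,H)$: using Lemma~\ref{lemma} it writes down the idempotent $E_1^{\ast}$ explicitly, verifies the identity $D_HE_2D_H+E_1^{\ast}=I$ by a coefficient computation, deduces $D_HE_2D_H=E_0^{\ast}+E_2^{\ast}$, and only then applies both sides to $\mathbf{1}$ to obtain $E_2D_H\mathbf{1}=D_H\mathbf{1}$. You bypass all of this by observing that regularity of $S(G,H)$ makes $\mathbf{1}$ an eigenvector of $B'=D_HBD_H$, that the ``different parameters'' hypothesis forces the eigenvalue to be $-1-2\theta_2$ rather than $-\rho$, and that conjugating back immediately places $D_H\mathbf{1}$ in the $(-1-2\theta_2)$-eigenspace of $B$, which is $\operatorname{Im}(E_2)$. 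Your approach is shorter and needs no structural information about the second Bose--Mesner algebra; as a bonus, projecting onto $E_0$ gives $|H|=v/2$ for free, which the paper extracts separately after the proof. What the paper's route buys is the stronger operator identity $D_HE_2D_H=E_0^{\ast}+E_2^{\ast}$, which explains how the $E_2$-embedding of $G$ and the $E_2^{\ast}$-embedding of $S(G,H)$ are related (the latter sits on a single hyperplane inside the former); your argument, being aimed only at the vector $D_H\mathbf{1}$, does not recover this.
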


\begin{proof}
	Let $H$ be a subset of $V$. Suppose that $S(G,H)$ is a strongly regular graph with the other parameters. 
	Note that if $G$ has the eigenvalues $k$, $\theta_1$ and $\theta_2$, then $S(G,H)$ has the eigenvalues $k^{\ast}$, $\theta_1$ and $\theta_2$,  where $k^{\ast}=k+v/2-2 \mu$ is degree of $S(G,H)$. Let $m_i$ be the multiplicities of $\theta_i$ as eigenvalues of $G$, and $m_i^{\ast}$ be those of $S(G,H)$. Then, $m_1+1=m_2^{\ast}$ and $m_2=m_2^{\ast}+1$.
	Since $S(G,H)$ is in the switching class of the same regular two-graph as that of $G$, $S(G,H)$ has the condition $v=2(k^{\ast}-\theta_2)$. By Lemma \ref{lemma}, the primitive idempotent of $S(G,H)$ is 
	\[
		\frac{v}{m_1^{\ast}}E_1^{\ast}= I+\frac{1}{\rho^{\ast}}A_1^{\ast}-\frac{1}{\rho^{\ast}}A_2^{\ast}
	\]
	where $\rho^{\ast}=-1-2 \theta_2$, and $A_1^{\ast}$ and $A_2^{\ast}$ are the $(0,1)$-adjacency matrix of $S(G,H)$ and that of the complement, respectively. 
	$D_H E_2 D_H$ is in the Bose-Mesner algebra of $S(G,H)$. Then, 
	\begin{align*}
		D_H E_2 D_H + E_1^{\ast}&=\frac{m_2}{v} \left( I-\frac{1}{\rho} A_1^{\ast} + \frac{1}{\rho}A_2^{\ast} \right) + \frac{m_1^{\ast}}{v} \left( I+\frac{1}{\rho^{\ast}} A_1^{\ast} - \frac{1}{\rho^{\ast}}A_2^{\ast} \right) \\
					&= \frac{m_2+m_1^{\ast}}{v}I+\frac{ m_1^{\ast} \rho -m_2 \rho^{\ast}}{v\rho \rho^{\ast}} ( A_1^{\ast}-A_2^{\ast})\\
					&= \frac{m_2+m_1+1}{v}I+\frac{ (m_1+1)(1+2 \theta_1) +m_2 (1+2\theta_2)}{v\rho \rho^{\ast}} ( A_1^{\ast}-A_2^{\ast})\\
					&=I
	\end{align*}
	where $\rho=1 + 2\theta_1$. Therefore, $D_H E_2 D_H$ is equal to $E_0^{\ast}+E_2^{\ast}$. Since
	\[
	E_2 D_H j=D_H(E_0^{\ast}+E_2^{\ast})j=v D_H j
	\]
	where $j$ is the all one column vector, the spherical embedding with respect to $E_2$ is on two hyperplanes which are perpendicular to $D_H j$.

\end{proof}
The finite set with the Gram matrix $D_H E_2 D_H$ is on one hyperplane of dimension $m_2-1$, and is identified with the spherical embedding with respect to $E_2^{\ast}$.  
Since the spherical embedding $X$ with respect to $E_2$ is a spherical $1$-design, $\sum_{x \in X} x = 0$ and hence the cardinality of the switching set $H$ is equal to $v/2$ by Theorem \ref{key thm}.

\begin{proof}[Proof of Theorem \ref{main2}]
First, suppose $S(G,H)$ is strongly regular with the other parameters. 
Then, the cardinality of $H$ is equal to $v/2$. Let $A$ be the $(0,1)$ adjacency matrix of $G$, which is partitioned according to $\{H,V \setminus H\}$. Namely, 
	\[
	A=\left[ \begin{array}{cc}
	A_H & C \\
	^t C   & A_{V \setminus H} 
	\end{array} \right]
	=
	\left[ \begin{array}{cc}
	A_{1,1} & A_{1,2} \\
	A_{2,1}   & A_{2,2} 
	\end{array} \right]
	\]
where $A_H$ ($=A_{1,1}$) is the $(0,1)$ adjacency matrix of the subgraph induced by $H$, and $A_{V\setminus H}$ ($=A_{2,2}$) is that by $V \setminus H$. By Theorem \ref{BS-1}, the both subgraphs induced by $H$ and $V \setminus H$ are $n$ regular for some integer $n$. The number of entries $1$ in $C$ is $v(k-n)/2$. 
After switching with respect to $H$, $C$ becomes $J-C$. Therefore, the number of entries $1$ in the $(0,1)$ adjacency matrix of $S(G,H)$ is $v(2n-k+v/2)$. On the other hand,  $S(G,H)$ is $k+v/2-2\mu$ regular, and the number of entries $1$ in  the $(0,1)$ adjacency matrix of $S(G,H)$ is $v(k+v/2-2\mu)$. Thus, $n=k-\mu$.

Second, suppose $H$ is $k-\mu$ regular and its cardinality is $v/2$.
Let $A$ be the $(0,1)$ adjacency matrix of $G$, which is partitioned according to $\{H,V \setminus H\}$.  The quotient matrix is 
	\[
	F=\left[ \begin{array}{cc}
	k-\mu & \mu \\
	\mu   & k-\mu 
	\end{array} \right].
	\]
Then, the eigenvalues of $F$ are $k$ and $k-2 \mu =\theta_2$. This interlacing is tight, and hence this partition is equitable. Hence, $S(G,H)$ is a strongly regular graph whose degree $k+v/2-2\mu$.
\end{proof}

We introduce another method of determining the cardinality of the switching set $H$ in Theorem \ref{main2} (ii) \cite{Brouwer-p, Muzychuk-Klin}.

By Theorem \ref{BS-1} (ii), the subgraph induced by $H_1$ is $w_1$ regular, and hence each vertex $H_1$ is adjacent to $k-w_1$ vertices
of $H_2$. After switching, each vertex in $H_1$ is adjacent to $w_1$ vertices in $H_1$, and to $v-v_1-(k-w_1)$ vertices in $H_2$. Therefore, each vertex in $H_1$ is adjacent to $v-v_1-k+2w_1$ vertices in $S(G,H_1)$. Hence, $k+v/2-2 \mu =v-v_1-k+2w_1$ and 
	\begin{equation} \label{equa1}
	w_1=k-\mu-v/4+v_1/2. 
	\end{equation}
Similarly, each vertex in $H_2$ is adjacent to $v-v_2-k+2w_2$ vertices in $S(G,H_1)$ after switching. 
Since $v_2=v-v_1$ and $k+v/2-2 \mu =v-v_2-k+2w_2$, we have 
	\begin{equation} \label{equa2}
	w_2=k-\mu+v/4-v_1/2. 
	\end{equation}
By counting the number of edges between $H_1$ and $H_2$, we have $v_1(k-w_1)=v_2(k-w_2)$. Therefore, by equations (\ref{equa1}) and (\ref{equa2}), we get $v_1(\mu+v/4-v_1/2) = (v-v_1)(\mu-v/4+v_1/2)$, {\it i.e.,} $(v_1-v/2)(v/2-2 \mu) =0$. The case $v/2-2 \mu=0$ corresponds to $c=0$. Thus, $v_1=v/2$.

\section{Applications}
When $v \leq 280$, the known strongly regular graphs with $v=2(k-\theta_1)$ have the following parameters \cite{Brouwer}. 
	\begin{multline*}
	\{(v,k,\lambda,\mu)\}= \{(10, 6, 3, 4), (16, 10, 6, 6), (16, 9, 4, 6), (26, 15, 8, 9), (28, 
  15, 6, 10), (36, 21, 12, 12),\\
	(36, 20, 10, 12), (50, 28, 15, 
  16), (64, 36, 20, 20), (64, 35, 18, 20), (82, 45, 24, 25), (100, 55,
   30, 30),\\
	(100, 54, 28, 30), (120, 68, 40, 36), (120, 63, 30, 
  36), (122, 66, 35, 36), (126, 75, 48, 39), \\
	(126, 65, 28, 39), (136, 75, 42, 40), (136, 72, 36, 40), (144, 78, 42, 42), (144, 77, 40, 
  42),\\ 
	(170, 91, 48, 49), (176, 105, 68, 54), (176, 90, 38, 54), (196, 104, 54, 56), (210, 110, 55, 60), \\
	(226, 120, 63, 64), (256, 136, 72, 72), (256, 135, 70, 72), 
	(276, 140, 58, 84), (280, 144, 68, 80)\}
	\end{multline*}
The strongly regular graphs with $v \leq 36$ in the above list have been classified. If a regular two-graph has been classified, then we may classify the corresponding strongly regular graphs. When $v\geq 50$ the classifications of regular two-graphs are not known, except a regular two-graph on $276$ vertices. Indeed, a regular two-graph on $276$ vertices is unique \cite{Goethals-Seidel}. Moreover, Goethals and Seidel \cite{Goethals-Seidel} gave one strongly regular graph with parameters $(276,140,58,84)$ in the switching class of the regular two-graph on $276$ vertices.  By Theorem \ref{main}, we can easily construct new strongly regular graphs with $(276,140,58,84)$. Indeed, a $6$-clique holds the conditions in Theorem \ref{main}. 
By the algebra software Magma, we can easily get the set of all $6$-cliques. 
And, we make the set of $6$-cliques up to transitiveness. It is easy to make the strongly regular graphs with the same parameters by switching with respect to the $6$-clique. 
New strongly regular graphs are also applicable to this method. By repeating this method, we can efficiently get new examples. We found at least $100000$ pairwise non-isomorphic strongly regular graphs with parameters $(276,140,58,84)$. However, we have not succeeded the classification of strongly regular graphs with parameters $(276,140,58,84)$, because there are too many induced subgraphs holding the conditions in Theorem \ref{main}. Since the disjoint union of spherical $1$-designs is also a spherical $1$-design, the disjoint union of $6$ clique subgraphs also satisfy the condition in Theorem \ref{main}. We can guess $100000$ strongly regular graphs are very small part of the classification.     

\begin{problem}
The strongly regular graphs with parameters $(276,140,58,84)$ are pseudogeometric $(5,27,3)$-graph. Is there a geometric strongly regular graph with there parameters? (please see \cite{Soicher} for the terminologies)
\end{problem}

\textbf{Acknowledgements.}
The author would like to thank Professor Andries E.\ Brouwer, Professor Vladimir D.\ Tonchev and Professor Akihiro Munemasa for providing useful comments and informations.

\end{document}